\newtheorem{thm}{Theorem}[section]
\newtheorem*{thm*}{Theorem}
\newtheorem{lem}[thm]{Lemma}
\newtheorem*{lem*}{Lemma}
\newtheorem{cor}[thm]{Corollary}
\newtheorem*{cor*}{Corollary}
\newtheorem{prop}[thm]{Proposition}
\newtheorem*{prop*}{Proposition}
\theoremstyle{definition} 
\newtheorem{defn}[thm]{Definition}
\newtheorem*{defn*}{Definition}
\theoremstyle{remark}
\newtheorem{rem}[thm]{Remark}
\newtheorem*{rem*}{Remark}
\newtheorem*{example*}{Example}
\newtheorem*{que*}{Question}
\newcommand{\N}{\mathbb N}
\newcommand{\Z}{\mathbb Z}
\newcommand{\B}{\mathcal{B}}
\newcommand{\abs}[1]{\left\vert#1\right\vert}
\newcommand{\set}[1]{\left\{#1\right\}}
\newcommand{\eps}{\varepsilon}
\newcommand{\CB}{\mathcal{B}}
\newcommand{\ind}{\mathbbm{1}}
\renewcommand{\emptyset}{\varnothing}
\renewcommand{\tilde}{\widetilde}
\newcommand{\vect}[1]{\mathbf{#1}}
\DeclareMathOperator{\diam}{diam}
\newcommand{\PP}{\mathcal P}
\newcommand{\sd}{\bigtriangleup}
\renewcommand{\epsilon}{\varepsilon}
\renewcommand{\leq}{\leqslant}
\renewcommand{\geq}{\geqslant}
\title{Minimal models for actions of amenable groups}
\author{Bartosz Frej and Dawid Huczek}
\keywords{topological model, dynamical system, group action, amenable group, invariant measure, Choquet simplex, Borel isomorphism}
\subjclass[2010]{28D15, 37A15, 37B05}
\begin{document}

\begin{abstract}
We prove that on a metrizable, compact, zero-di\-men\-sio\-nal space every free action of an amenable group is measurably isomorphic to a minimal $G$-action with the same, i.e.\ affinely homeomorphic, simplex of measures.
\end{abstract}
\maketitle

\section{Introduction}
\subsection{Background}
The study has origins in the famous Jewett-Krieger theorem: to any
ergodic and invertible measure-preserving map there exists an isomorphic strictly
ergodic (i.e.~uniquely ergodic and minimal) homeomorphism. Natural further investigation concerns more precise modelling, in which given a topological dynamical system one searches for the minimal system with the same measure dynamics, i.e. having identical simplex of invariant measures with measure preserving actions being isomorphic. The adequate theorem, valid on zero-dimensional spaces even for non-invertible case, was proved by Downarowicz in \cite{D}. The next step was made by Frej and Kwa\'snicka in \cite{FK}, where the analogous result for free $\Z^d$ actions was proved. At the present paper we use the techniques developed in \cite{H} to generalize the latter to free actions of arbitrary amenable groups on zero-dimensional spaces. 

\subsection{Basic notions}
Let $G$ be a countable \emph{amenable group}, i.e. a group in which there exists a sequence of finite sets $F_n\subset G$ (called a \emph{F\o lner sequence}, or the sequence of F\o lner sets), such that for any $g\in G$ we have
\[\forall_{g\in G}\ \ \lim_{n\to\infty}\frac{\abs{gF_n\sd F_n}}{\abs{F_n}}\to 0,\]
where $gF=\set{gf:f\in F}$, $\abs{\cdot}$ denotes the cardinality of a set, and $\sd$ is the symmetric difference.

Throughout the paper we assume that $X$ is a zero-dimensional, compact metrizable space. The action of $G$ on $X$ is determined by a homomorphism from $G$ to the group all of homeomorphisms of $X$, but we will avoid introducing unnecessary notation writing $gx$ for the image of $x$ by an appropriate homeomorphism.
The action of $G$ is \emph{free} if $gx=x$ for any $g\in G$ and $x\in X$ implies that $g$ is the neutral element of $G$. The action is \emph{minimal} if for any $x\in X$ the closure of the orbit $\{gx: g\in G\}$ is equal to the whole $X$, which is equivalent to non-existence of non-trivial closed $G$-invariant subsets.

A measure $\mu$ on $X$ is $G$-invariant if $\mu(gA)=\mu(A)$ for all $g\in G$.
By $\PP_G(X)$ we denote the set  of all $G$-invariant Borel probability measures on~$X$. It is well known that in our case $\PP_G(X)$ endowed with the weak* topology is a compact, metrizable and convex subset of the space of all Borel probability measures on $X$. Every point of $\PP_G(X)$ has a unique representation as a barycenter of a certain Borel measure concentrated on the Borel set of all ergodic measures. These properties are usually abbreviated by saying that $\PP_G(X)$ is a Choquet simplex. A set $E\subset X$ is called \emph{full} if $\mu(E)=1$ for every $\mu \in \PP_G(X)$. 

\begin{defn} \label{borel_isomorphism}
We say that two dynamical systems $(X, G)$ and $(Y, G)$ with the same acting group $G$ are \emph{Borel$^*$ isomorphic} if there exists an equivariant (i.e.~commuting with the action) Borel-measurable bijection $\Phi: \tilde{X} \rightarrow \tilde{Y}$ between full invariant subsets $\tilde{X} \subset X$ and $\tilde{Y} \subset Y$, such that the conjugate map $\Phi^*: \mathcal P_G(X) \rightarrow \mathcal P_G(Y)$ given by the formula $\Phi^* (\mu) = \mu\circ\Phi^{-1}$ is a (affine) homeomorphism with respect to weak* topologies.
\end{defn}

Our main result is the following theorem.
\begin{thm} \label{main_thm}
If $X$ is a  metrizable, compact, zero-dimensional space and an amenable group $G$ acts freely on $X$ then $(X, G)$ is Borel* isomorphic to a minimal dynamical system $(Y, G)$ (with $Y$ being also metrizable, compact and zero-dimensional). 
\end{thm}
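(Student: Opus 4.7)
The plan is to build $Y$ as (the closure of the image of) a symbolic system obtained from $X$ by a hierarchy of equivariant clopen ``recodings'', each adjusting the previous one on a set of measure less than $\epsilon_n$, with $\sum_n \epsilon_n < \infty$. Summability will guarantee, via Borel--Cantelli, that on a full $G$-invariant set every invariant measure sees only finitely many modifications, yielding the required Borel bijection on full sets and the induced affine homeomorphism of invariant-measure simplices. First, exploit zero-dimensionality to fix a refining sequence of finite clopen partitions $\mathcal{Q}_1 \preceq \mathcal{Q}_2 \preceq \cdots$ generating the topology of $X$, so that the joint symbolic coding $\phi:X \to \prod_n \mathcal{Q}_n^{G}$, defined by $\phi_n(x)(g) = $ the atom of $\mathcal{Q}_n$ containing $gx$, is an equivariant topological embedding.

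The minimality of $Y$ will be imposed by the tiling technology from \cite{H} (resting on an exact-tiling theorem for amenable groups). For each $n$, choose finitely many \Folner{} shapes $T_{n,1},\dots,T_{n,k_n} \subset G$ of sizes at least $N_n \uparrow \infty$, together with a clopen equivariant partition of $X$ into translates of these shapes; arrange that the level-$(n+1)$ tiling refines the level-$n$ one, and that the shapes grow so quickly that each level-$(n+1)$ tile contains a large interior region disjoint from the relabeling regions of all earlier levels. Within each level-$n$ tile designate a sub-block of relative size less than $\epsilon_n$ and overwrite the corresponding portion of $\phi_n$ by a fixed template that runs through every $\mathcal{Q}_{n-1}$-word on the shape $F_{n-1}$ that occurs in $\phi_{n-1}(X)$. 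The resulting modified coding $\Phi_n: X \to \mathcal{Q}_n^G$ is Borel and equivariant, and its image is a subshift in which every pattern from the original coding is forced to appear in every orbit, so that $Y = \overline{\Phi(X)} \subset \prod_n \mathcal{Q}_n^G$ is minimal.

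For each $\mu \in \PP_G(X)$ the $\mu$-mass of the modification region at level $n$ is at most $\epsilon_n$, so by Borel--Cantelli the invariant set $\tilde X$ of points affected only finitely often has $\mu(\tilde X) = 1$ for every invariant $\mu$. On $\tilde X$ the map $\Phi$ differs from the original coding by a finite equivariant relabeling, hence is a Borel bijection onto a full set $\tilde Y \subset Y$; the pushforward $\Phi^{*}$ then implements an affine homeomorphism $\PP_G(X) \to \PP_G(Y)$, verifying that $\Phi$ is the desired Borel* isomorphism. The principal obstacle is the hierarchical compatibility of the patchings: every template planted at level $n$ must survive every subsequent overwriting, which forces the level-$(n+1)$ shapes to be vastly larger than those at level $n$ and to expose interior regions in which fresh patches can be placed without disturbing earlier ones. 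This is precisely the coherence property that the exact-tiling machinery of \cite{H} is designed to provide, and it is also what secures the surjectivity of $\Phi^*$ onto $\PP_G(Y)$, since any invariant measure on $Y$ must be concentrated on the image of $\tilde X$.
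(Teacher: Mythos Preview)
Your overall strategy matches the paper's: iterative small modifications, Borel--Cantelli for the full set, template insertion for minimality. But there is a genuine gap exactly where you flag the ``principal obstacle.'' Saying that level-$(n{+}1)$ shapes are ``vastly larger'' and that tiling machinery from \cite{H} ``provides coherence'' does not supply a mechanism. The concrete difficulty is twofold. First, your templates at level $n$ run through patterns of the \emph{original} coding $\phi_{n-1}$, not of the already-modified coding $\Phi_{n-1}$; after step $n$ the system contains new blocks (the patched regions themselves), and inserting only original patterns does not make those new blocks syndetic. Second, even with the right templates, a modification at level $n{+}1$ can straddle the boundary of a level-$j$ template ($j\leq n$), and then no window of the expected size is guaranteed to contain an intact level-$j$ block. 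The paper resolves both issues with devices absent from your sketch: the templates $\mathcal B_{k+1}$ are chosen $\eps_{k+1}$-dense among blocks of $X_k$ (the system after $k$ steps), so each inserted block already carries all earlier syndetic structure; the \emph{enlarging algorithm} swells each new modification region to absorb any partially overlapping earlier markers; and the window lemma (Lemma~\ref{window}) then guarantees that every translate of the syndeticity set $H_j$ contains a $T_j$-block lying entirely on one side of the (enlarged) modification region, hence either untouched since step $j$ or wholly inside a template which, being a block of $X_k$, already contains all of $\mathcal B_j$. This triad is the actual engine for inductive assumption~(4), and nothing in your outline replaces it.

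A secondary gap is injectivity of $\Phi$: overwriting genuinely destroys information, and ``differs by a finite equivariant relabeling'' does not explain why distinct points stay distinct. The paper avoids this by working in an array model with alphabet $(X\cup\{0,1,*\})^{\Z}$ and, at each step, \emph{copying} the content being overwritten to a higher row; the orbit of $x$ then sits on the topmost nonzero row of $\Phi(x)$ and can be read off directly. Finally, \cite{H} provides the Marker Lemma (clopen sets whose $H$-translates are disjoint and whose $F$-translates cover), not an exact-tiling theorem; the paper never partitions $G$ into tiles, and your appeal to ``exact tilings'' is a misattribution of what that reference delivers.
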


\subsection{Positive Banach density vs. syndeticity}
\begin{defn}
The set $S \subset G$ is \emph{right-syndetic} (we will briefly write \emph{syndetic}) if there exists a finite set $F\subset G$ such that $SF=G$.
\end{defn}
\begin{defn}
For $S\subset G$ and a finite $F\subset G$ denote 
\[
D_F(S)=\inf_{g\in G} \frac{|S\cap Fg|}{|F|}
\]
and
\[
D(S)=\sup\{D_F(S)\colon F\subset G, |F|<\infty\}.
\]
We call $D(S)$ the \emph{lower Banach density} of $S$.
\end{defn}
The following properties of the above notions are quite easy to prove.
\begin{prop}${}$
\begin{enumerate}
	\item If $(F_n)$ is a F\o lner sequence then $D(S)=\lim_{n\to\infty} D_{F_n}(S)$.
	\item $S$ is syndetic if and only if $D(S)>0$.
\end{enumerate}
\end{prop}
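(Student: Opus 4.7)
For part (1), the easy inequality $\limsup_n D_{F_n}(S) \leq D(S)$ follows immediately from the definition of $D(S)$ as a supremum. For the reverse, I would fix $\varepsilon > 0$ and a finite $F \subset G$ with $D_F(S) \geq D(S) - \varepsilon$, then for each $h \in G$ carry out the double count
\[
\sum_{g \in F_n h} |S \cap Fg| = \sum_{f \in F} |S \cap fF_n h|.
\]
The left-hand side is at least $|F_n|\cdot|F|\cdot D_F(S)$, while on the right each summand differs from $|S \cap F_n h|$ by at most $|fF_n h \sd F_n h| = |fF_n \sd F_n|$, which is $o(|F_n|)$ uniformly in $h$ by the F\o lner property applied to each of the finitely many $f \in F$. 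Solving yields $|S \cap F_n h|/|F_n| \geq D_F(S) - o_n(1)$ uniformly in $h$, so $D_{F_n}(S) \geq D_F(S) - o_n(1)$; letting $n \to \infty$ and $\varepsilon \to 0$ gives $\liminf_n D_{F_n}(S) \geq D(S)$.

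For part (2), the direction $D(S) > 0 \Rightarrow S$ syndetic is obtained by unwinding definitions: if $D_F(S) > 0$ for some $F$, then $|S \cap Fg| \geq 1$ for each $g$, so every $g$ lies in $F^{-1}S$, and $F^{-1}S = G$ witnesses syndeticity (the equivalence between the left-covering formulation $F^{-1}S = G$ and the right-covering formulation $SF' = G$ is standard for countable amenable groups). For the converse, starting from $SF = G$ and fixing a two-sided F\o lner sequence $(F_n)$ (whose existence in any countable amenable group is classical), the inclusion $F_n \subset SF$ gives $|F_n| \leq \sum_f |F_n \cap Sf| = \sum_f |F_n f^{-1} \cap S|$. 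The right F\o lner property lets one replace each $F_n f^{-1}$ by $F_n$ up to $o(|F_n|)$, and a symmetric double-counting, applied to an arbitrary shifted window $F_n g$, propagates this into the uniform-in-$g$ bound $D_{F_n}(S) \geq 1/|F| - o_n(1)$. Part (1) then yields $D(S) \geq 1/|F| > 0$.

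The principal technical point throughout is the left/right asymmetry: $D_F(S)$ is defined via right-translates $Fg$, while the syndeticity condition $SF = G$ is asymmetric between $S$ and $F$. In abelian groups everything is transparent, but in general one must use a F\o lner sequence that is approximately invariant under both left and right multiplication so that the translation-error estimates in the double counts become uniform over all shift parameters; fortunately such two-sided F\o lner sequences exist in every countable amenable group, and this is exactly what makes both halves of (2) go through.
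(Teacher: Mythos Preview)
The paper does not supply a proof of this proposition at all; it merely states that the two assertions ``are quite easy to prove.'' So there is no argument in the paper to compare yours against, and your write-up is effectively filling a gap the authors left open.

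Your proof of (1) by the averaging identity
\[
\sum_{g\in F_nh}|S\cap Fg|=\sum_{f\in F}|S\cap fF_nh|
\]
is the standard one and is correct: the left side is bounded below by $|F_n|\,|F|\,D_F(S)$, the right side differs from $|F|\,|S\cap F_nh|$ by at most $\sum_{f\in F}|fF_n\sd F_n|=o(|F_n|)$ uniformly in $h$, and this yields $D_{F_n}(S)\geq D_F(S)-o_n(1)$.

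For (2), the implication $D(S)>0\Rightarrow F^{-1}S=G$ is immediate, and you are right to flag that this is a \emph{left} covering, whereas the paper's formal definition of syndetic is the right covering $SF'=G$; in fact the paper itself only ever uses the left formulation in the body (e.g.\ ``$Eg\cap C_1(x)\neq\emptyset$ for all $g$''), so this tension is already present in the source. Your appeal to the equivalence of the two notions in countable amenable groups is legitimate as a citation.

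There is, however, a genuine gap in your sketch of the converse $SF=G\Rightarrow D(S)>0$. When you pass to a shifted window $F_ng$ and try to replace $|F_ngf^{-1}\cap S|$ by $|F_ng\cap S|$, the error is
\[
|F_ngf^{-1}\sd F_ng|=|F_n(gf^{-1}g^{-1})\sd F_n|,
\]
and the right-\Folner\ condition controls this only for the single element $gf^{-1}g^{-1}$. As $g$ ranges over $G$ this element runs through the entire conjugacy class of $f^{-1}$, which is in general infinite, so the $o(|F_n|)$ bound is \emph{not} uniform in $g$. Your phrase ``a symmetric double-counting, applied to an arbitrary shifted window $F_ng$, propagates this'' therefore does not go through as written. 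One way to repair the argument is to abandon the attempt at a direct density estimate and instead invoke (once more) the standard fact that in countable amenable groups right-syndetic implies left-syndetic, after which $D(S)>0$ is immediate; alternatively one can pass through a two-sided invariant mean. Either route is short, but neither is the computation you outlined.
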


\subsection{The array representation of $(X,G)$}
Let $d_X$ denote a metric on $X$. 
Let $\Lambda=(X\cup \set{0,1,*})^\Z$, where $0$,$1$ and $*$ are additional elements which do not initially occur in $X$. We will regard elements of $\Lambda$ as bilateral sequences which will have elements of $X$ or $0$ on non-negative coordinates and elements $\set{0,1,*}$ on negative coordinates.
We define a compact metric $d$ on $X\cup \set{0,1,*}$ by
\[
d(x, y) = 
\begin{cases}
d_X(x, y) &$ for $ x,y \in X\\
\diam(X) &$ if $ x\notin X$ or $y\notin X
\end{cases} \qquad(\textrm{for } x\neq y).
\]
 We can now define a distance $d_\Lambda$ between $\vect x = (\ldots,x^{-1},x^0, x^1, \dots)$ and $\vect y = (\ldots,y^{-1},y^0, y^1, \dots)$ in $\Lambda$ by:
\[
d_{\Lambda} (\vect x, \vect y) = \sum_{i=-\infty}^{\infty} 2^{-\abs{i}} d(x^i, y^i).
\]
Note that $(\Lambda, d_{\Lambda})$ is a compact metric space. The space $\Lambda^G$ is an analogue of a multidimensional shift space. The action of $G$ on $\Lambda^G$ is defined by
\[
\left(gy\right)(h)=y(hg)\qquad\text{for every } y\in\Lambda^G.
\]

We define an array representation $\hat X$ of $X$ as a range of a map $X\ni x\mapsto \hat x\in \Lambda^G$ defined by
\[
\hat x(g)_n = \begin{cases}
gx & \text{if}\ n=0\\
0 & \text{otherwise}
\end{cases}.
\]
Levels $n\not=0$ will be used in the construction of a Borel$^*$ isomorphism aforementioned in theorem \ref{main_thm}.
It is not very hard to verify that $\hat X$ is compact and $G$-invariant and that $x\mapsto \hat x$ is in fact a topological conjugacy between $(X,G)$ and $(\hat X,G)$.

By a block in $\Lambda^G$ we mean a map $B\colon F\to\Lambda$, where $F$ is a finite subset of $G$. We define a distance between blocks $B_1$, $B_2$ on a common domain $F$ by 
\[
D(B_1,B_2) = \sup_{g\in F} d_\Lambda(B_1(g),B_2(g))
\]
and we set $D(B_1,B_2)=\diam(X)$ if the domains of $B_1$ and $B_2$ are two different subsets of $G$. We say that $B'$ is a \emph{subblock} of $B$ if the domain $F$ of $B$ contains the domain $F'$ of $B'$ and both blocks agree on $F'$. A block $B$ with the domain $F$ \emph{occurs} in $Z\subset \Lambda^G$ if there is $z\in Z$ and $g\in G$ such that $z(fg)=B(f)$ for every $f\in F$.
\begin{rem}
By Tikhonov's theorem, the set of all blocks on a fixed domain is a compact space. Moreover, compactness of $\hat X$ implies that the set of all blocks, which occur in $\hat X$ and whose domain is fixed, is compact. It follows that for every $\epsilon>0$ it contains a finite $\epsilon$-dense subset.
\end{rem}
\subsection{A few useful properties of the F\o lner sequence}
First of all note that in any amenable group there exists a F\o lner sequence with the following additional properties (see \cite{E}):
\begin{enumerate}
\item $F_n\subset F_{n+1}$ for all $n$, 
\item $e\in F_n$ for all $n$ ($e$ denotes the neutral element of $G$),
\item $\bigcup_n F_n=G$,
\item $F_n=F_n^{-1}$.
\end{enumerate}
Throughout this paper, we will assume that the F\o lner sequence which we use has those properties. 

If $F$ and $A$ are finite subsets of $G$ and $0<\delta<1$, we say that $F$ is \emph{$(A,\delta)$-invariant} if
\[\frac{\abs{F\sd AF}}{\abs{F}}< \delta,\]
where $AF=\set{af:a \in A,f\in F}$. Observe that if $A$ contains the neutral element of $G$, then $(A,\delta)$-invariance is equivalent to the simpler condition
\[\abs{AF}< (1+\delta)\abs{F}.\]

If $(F_n)$ is a F\o lner sequence, then for every finite $A \subset G$ and every $\delta>0$ there exists an $N$ such that for $n>N$ the sets $F_n$ are $(A,\delta)$-invariant. This type of invariance has the following consequence:

\begin{lem}\label{lem:invariance}
Let $F\subset G$ be a finite set. For any $\eps$ there exists a $\delta$ such that if $H\subset G$ is $(F,\delta)$-invariant then the set $H_F=\set{h\in H:Fh\subset H}$ has cardinality greater than $(1-\eps)\abs{H}$.
\end{lem}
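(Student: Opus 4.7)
The plan is to bound the ``boundary'' set $H \setminus H_F = \set{h\in H : Fh\not\subset H}$ by relating it to the symmetric difference $H \sd FH$ via a simple double-counting argument. The key observation is that $h \in H \setminus H_F$ precisely when there exists some $f \in F$ with $fh \notin H$; since $fh \in FH$ automatically, this forces $fh \in FH \setminus H$. Hence the ``excess'' of $FH$ over $H$, which is exactly what $(F,\delta)$-invariance controls, witnesses the size of $H \setminus H_F$.

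To make this precise I would count pairs $(f,h) \in F \times (H \setminus H_F)$ satisfying $fh \notin H$ in two ways. On the one hand, each $h \in H \setminus H_F$ contributes at least one such pair by definition, giving $\abs{H \setminus H_F}$ as a lower bound. On the other hand, for each fixed $f \in F$ the left translation $h \mapsto fh$ is injective on $G$ and sends the pairs in question into $FH \setminus H$, yielding an upper bound of $\abs{F}\cdot\abs{FH \setminus H}$. Combining the two and using $\abs{FH \setminus H} \leq \abs{H \sd FH} < \delta\abs{H}$ gives
\[
\abs{H \setminus H_F} \leq \abs{F}\cdot\abs{FH \setminus H} < \delta\abs{F}\abs{H}.
\]
Choosing $\delta < \eps/\abs{F}$ at the outset then immediately yields $\abs{H_F} > (1-\eps)\abs{H}$.

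I do not foresee a substantial obstacle: the argument is a one-shot double count, and the dependence $\delta \sim \eps/\abs{F}$ is forced by the counting. The only point that deserves a moment of care is the direction of translation, since elsewhere in the paper one also sees right translations; here the definition of $(F,\delta)$-invariance involves the set $FH$ (left multiplication of $H$ by $F$), which matches exactly the $Fh$ appearing in the definition of $H_F$, so no inversion or change of sides is required.
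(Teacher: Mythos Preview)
Your proof is correct and essentially identical to the paper's: both bound $\abs{H\setminus H_F}$ by $\abs{F}\cdot\abs{FH\setminus H}$ via the same double-counting argument (the paper phrases it by observing that each $g\in FH\setminus H$ has at most $\abs{F}$ preimages $h=f^{-1}g$, you phrase it by fixing $f$ and using injectivity of $h\mapsto fh$), and both take $\delta=\eps/\abs{F}$.
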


\begin{figure}[h]
\centering
\includegraphics[width=\textwidth]{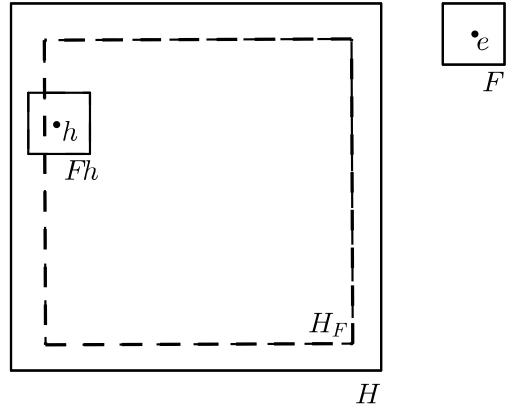}
\parbox{0.8\textwidth}{\renewcommand{\normalsize}{\footnotesize}\caption{
A picture illustrating lemma \ref{lem:invariance}. The set $H_F$ marked with a broken line has cardinality greater than $(1-\eps)\abs{H}$, where $H$ is a big square marked with a solid line.}
\label{lem17fig}
}
\end{figure}

\begin{proof}
Set $\delta=\frac{\eps}{\abs{F}}$. Observe that if $h\notin H_F$ then for some $f\in F$ we have $fh \in FH\setminus H$. Obviously, for every $g\in FH\setminus H$ the number of elements $h\in H$ such that $fh=g$ for some $f\in F$ is at most $\abs{F}$. Therefore the number of elements $h\in H$ such that $fh\in FH\setminus H$ for some $f\in F$ is at most $\abs{F}\abs{FH\setminus H}\leq\abs{F}\abs{FH\sd H}<\abs{F}\delta\abs{H}=\epsilon\abs{H}$.
\end{proof}

\begin{lem}		\label{window}
Let $F\subset G$ contain the neutral element $e$. For any $l$, there exists a set $H \supset F$ such that for every $A\subset G$ and $g \in G$ if $FA\subset E \subset F^lA$ then either $Fh \subset Hg \cap E$ or $Fh \subset Hg \setminus E$ for some $h \in Hg$.
\end{lem}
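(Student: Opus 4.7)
My plan is to define $H = F F^{-l} F$ (where $F^{-l} = (F^{-1})^l$) and run a short dichotomy built around the $F$-interior $H_F = \set{h \in H : Fh \subset H}$ from Lemma~\ref{lem:invariance}. Two structural facts follow immediately: since $e \in F$ (so $e \in F^{-l}$), one has $F \subset H$ by writing $f = f \cdot e \cdot e$; and $F^{-l} F \subset H_F$, because any $\eta = uv$ with $u \in F^{-l}$ and $v \in F$ satisfies $\eta = e \cdot u \cdot v \in F F^{-l} F = H$ and $F\eta \subset F F^{-l} F = H$. In particular, $e \in H_F$.

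Reducing to $g = e$ is immediate: passing from $(A, E)$ to $(Ag^{-1}, Eg^{-1})$ preserves the hypothesis $FA \subset E \subset F^l A$, and an $h'$ that works for the translated data yields $h = h'g \in Hg$ with the required conclusion, since $Fh \subset Hg \cap E$ (resp.\ $Hg \setminus E$) is equivalent to $Fh' \subset H \cap Eg^{-1}$ (resp.\ $H \setminus Eg^{-1}$).

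The main argument is then a dichotomy. If $A \cap H_F \neq \emptyset$, any $h$ in this intersection satisfies $Fh \subset H$ (by definition of $H_F$) and $Fh \subset FA \subset E$, so $Fh \subset H \cap E$. Otherwise $A \cap H_F = \emptyset$, and I claim $H_F \not\subset F^{-1} F^l A$: for if we had $H_F \subset F^{-1} F^l A$, then $e \in H_F$ would force $e = uva$ for some $u \in F^{-1}$, $v \in F^l$, $a \in A$, yielding $a = v^{-1} u^{-1} \in F^{-l} F \subset H_F$ and contradicting $A \cap H_F = \emptyset$. So one can pick $h \in H_F \setminus F^{-1} F^l A$, which gives $Fh \subset H$ together with $Fh \cap F^l A = \emptyset$, and since $E \subset F^l A$ this forces $Fh \cap E = \emptyset$, i.e., $Fh \subset H \setminus E$.

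The conceptual crux, and the only real design choice, is calibrating $H$ so that the element $a \in F^{-l} F$ extracted in the contradiction step is itself an $F$-interior point of $H$: this is precisely the requirement $F \cdot F^{-l} F \subset H$, and $H = F F^{-l} F$ is the minimal candidate satisfying both this condition and $H \supset F$.
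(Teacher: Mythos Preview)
Your proof is correct and genuinely different from the paper's. The paper chooses $H$ to be a F{\o}lner set so invariant that the ``deep interior'' $D=\set{h\in H:F^{-l}Fh\subset H}$ occupies more than a $(1-\tfrac{1}{3|F|^{l+1}})$-fraction of $H$, and then argues by counting: either $A\cap Hg$ is large enough to meet $Dg$ (giving $Fh\subset Hg\cap E$), or it is small enough that $Hg\setminus F^{-1}F^lA$ meets $Dg$ (giving $Fh\subset Hg\setminus E$). Your argument is purely algebraic: you take the explicit set $H=FF^{-l}F$, observe that $F^{-l}F$ lies in the $F$-interior $H_F$, and run a dichotomy on whether $A$ meets $H_F$, the key step being that $e\in F^{-1}F^lA$ already forces $A\cap F^{-l}F\neq\emptyset$. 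Your approach is more elementary---it does not invoke amenability or any cardinality estimate---and produces a minimal explicit $H$; the paper's approach stays within the F{\o}lner framework used throughout and yields an $H$ that is itself a F{\o}lner set, though this extra structure is not used later. Note also that your $H$ still works for any subset $F'\subset F$ containing $e$ (since $F'^{-l}F'\subset F^{-l}F\subset H_{F'}$), so Remark~\ref{window_rem} carries over unchanged.
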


\begin{figure}[h]
\centering
\includegraphics[width=\textwidth]{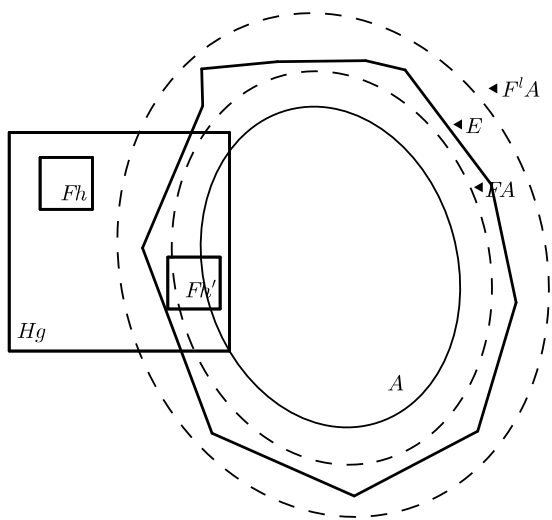}
\parbox{0.8\textwidth}{\renewcommand{\normalsize}{\footnotesize}\caption{
A picture illustrating lemma \ref{window}. The smallest elliptic set is $A$. It is surrounded by $FA$ and $F^lA$ marked by broken line, beetwen which there is a closed solid line bounding $E$. The big square is $Hg$. The two small squares show different positions of a translate of $F$: $Fh \subset Hg \setminus E$ and $Fh' \subset Hg \cap E$}
\label{lem18fig}}


\end{figure}
Intuitively, this can be interpreted as follows: If we divide $H$ in two parts such that the boundary is sufficiently ``regular'' (which is expressed by the fact that the dividing set $E$ is between $FA$ and $F^lA$), then at least one of the resulting parts is regular enough to contain a translated copy of $F$.
\begin{proof}
Let $H\subset G$ be a F\o lner set such that its subset 
\[D=\set{h\in H:F^{-l}Fh\subset H}\]
has cardinality equal to at least $(1-\frac{1}{3\abs{F}^{l+1}})\abs{H}$. Suppose that the cardinality of $A\cap Hg$ is at least $\frac{1}{3\abs{F}^{l+1}}\abs{H}$. Then $A\cap Dg$ is nonempty, and for $h\in A\cap Dg$ we have $Fh\subset E$ and $Fh\subset Hg$. Otherwise, if the cardinality of $A\cap Hg$ is less than $\frac{1}{3\abs{F}^{l+1}}\abs{H}$, then we will show that $Hg\setminus F^{-1}F^lA$ (and thus also $Hg\setminus E$) has nonempty intersection with $Dg$. Indeed, observe that if $h$ is in $Hg\cap F^{-1}F^lA$, then $h\in F^{-1}F^la$ for some $a\in A$. Either $a\in Hg$ (and then $h\in F^{-1}F^l(Hg\cap A)$) or $a\notin Hg$. In the latter case $F^{-l}Fh$ is not contained in $Hg$, and therefore $h$ cannot be in $Dg$. Combining these two cases, we see that
\[Hg\cap F^{-1}F^lA\subset \left(F^{-1}F^l(Hg\cap A)\right)\cup (Hg\setminus Dg).\]
Both sets on the right have cardinality smaller than $\frac{1}{3}\abs{H}$, therefore $\abs{Hg\cap F^{-1}F^lA}<\frac{2}{3}\abs{H}$, hence $Hg\setminus F^{-1}F^lA$ is large enough to have nonempty intersection with $Dg$. If $h$ is in such an intersection, then $Fh\subset Hg$ (since $h\in Dg$) and $Fh\cap F^lA=
\emptyset$, since otherwise we would have $h\in F^{-1}{F^l}A$.

\end{proof}

\begin{rem}	\label{window_rem}
Note that the set $H$ chosen in the above lemma also satisfies the hypothesis for any subset of $F$ containing the neutral element, since the reasoning above (in particular the choice of the set $D$) can be repeated unchanged for such subsets.
\end{rem}

\subsection{Marker lemma}

\begin{lem}[Marker lemma; see \cite{H}]	\label{marker}
Let $X$ be a compact, metrisable, zero-dimensional space. Let $G$ be a group acting continuously on $X$. For every $H \subset G$ there exists a clopen set $V$ such that:
\begin{enumerate}
	\item $g(V)$ are disjoint for each $g\in H$,
	\item $\bigcup_{g\in F} g(V)=X$ for some F{\o}lner set $F$.
\end{enumerate}
\end{lem}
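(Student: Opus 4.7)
The plan is to obtain $V$ by greedily selecting atoms from a sufficiently fine clopen partition, working within a Rokhlin-style tower modelled on a F\o lner set that absorbs $H$. First, I would fix a F\o lner set $F$ containing $H \cup H^{-1}H \cup \{e\}$, which exists because $G$ is amenable with $F_n \subset F_{n+1}$ and $\bigcup_n F_n = G$ (the argument proceeds for any $H$ for which such an $F$ can be chosen; in particular for $H$ finite, which is the case needed in the subsequent applications). The target is then a clopen $V$ with $\{hV\}_{h\in H}$ pairwise disjoint and $FV = X$.

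Next, freeness of the action together with continuity give, for every $x \in X$, a clopen neighbourhood $U_x$ such that the translates $\{fU_x : f \in F\}$ are pairwise disjoint: pick $U_x$ so small that the images $fU_x$ for $f \in F$ lie in pairwise disjoint clopen sets separating the points $fx$. By compactness, finitely many $U_{x_j}$ cover $X$, and zero-dimensionality lets us refine to a finite clopen partition $\mathcal P = \{P_1, \dots, P_m\}$ whose atoms each lie inside some $U_{x_j}$; in particular, for every atom $P$ and every $f \in F \setminus \{e\}$ we have $P \cap fP = \emptyset$.

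Then I would construct $V$ by a greedy sweep: set $V_0 = \emptyset$, and for each $k = 1, \dots, m$ put $V_k := V_{k-1} \cup P_k$ if the family $\{h(V_{k-1} \cup P_k) : h \in H\}$ is still pairwise disjoint, and $V_k := V_{k-1}$ otherwise. Let $V := V_m$. Clopenness and disjointness of $\{hV\}_{h\in H}$ are then immediate from the construction. For the covering property, if the atom $P_k$ is rejected, then some collision $h_1 P_k \cap h_2(V_{k-1} \cup P_k) \neq \emptyset$ must occur with $h_1 \ne h_2$ in $H$; the internal collision $h_1 P_k \cap h_2 P_k$ is excluded by the choice of $\mathcal P$ (since $h_1^{-1}h_2 \in F \setminus \{e\}$), so the collision must involve $V_{k-1}$, forcing $P_k \subset H^{-1}H \cdot V \subset FV$. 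Accepted atoms are trivially in $V \subset FV$ since $e \in F$, so $FV = X$.

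The main obstacle is arranging the inclusion $H \cup H^{-1}H \subset F$: this is automatic for $H$ finite, but if $H$ is infinite one must either interpret the statement as allowing a finite exhaustion of $H$ (combined with a compactness argument in the Cantor topology on clopen subsets of $X$ to pass to a limiting $V$) or use a more delicate Ornstein--Weiss-style quasi-tiling in place of the naive greedy step. Freeness is used essentially in constructing $\mathcal P$, since otherwise no clopen neighbourhood of a point fixed by some $f \in F \setminus \{e\}$ could have pairwise disjoint $F$-translates.
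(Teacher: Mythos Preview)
The paper does not actually prove this lemma; it is imported from the reference \cite{H} with no argument given, so there is no proof in the paper to compare against. Your overall strategy---use freeness and zero-dimensionality to produce a fine clopen partition whose atoms have pairwise disjoint $F$-translates, then build $V$ greedily---is the standard one and is essentially what the cited source does. Your remarks on the implicit hypotheses (finiteness of $H$, freeness of the action) are accurate and match how the lemma is actually invoked in Corollary~\ref{disjoint_occur}.

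There is, however, a genuine gap in the covering step. When an atom $P_k$ is rejected, your argument correctly yields $h_1 P_k \cap h_2 V_{k-1} \neq \emptyset$ for some $h_1\neq h_2$ in $H$, but this only says that \emph{some} point of $P_k$ lies in $H^{-1}H\,V_{k-1}$; it does not give the inclusion $P_k \subset H^{-1}H\,V$ that you assert. Maximality of $V$ among \emph{unions of atoms} with disjoint $H$-translates is strictly weaker than pointwise maximality, and only the latter forces every $x\notin V$ into $H^{-1}H\,V$. The fix is a one-line change to the greedy rule: instead of the all-or-nothing step, set
\[
V_k \;=\; V_{k-1}\,\cup\,\bigl(P_k\setminus H^{-1}H\,V_{k-1}\bigr).
\]
Disjointness of the $H$-translates of $V_k$ then follows exactly as before (using that $P_k\setminus H^{-1}H\,V_{k-1}\subset P_k$ still has pairwise disjoint $H$-translates, while cross-terms vanish by construction), and now every atom satisfies $P_k\subset V_k\cup H^{-1}H\,V_{k-1}\subset H^{-1}H\,V$, which gives $FV=X$ immediately.
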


\begin{cor}	\label{disjoint_occur}
Let $X$ and $G$ be as above. For every $x\in X$ and every finite $T\subset G$ there is a set $C(x)\subset G$ such that 
\begin{enumerate}
	\item $Tg\cap Tg'=\emptyset$ for each pair $g,g' \in C(x)$, $g\not=g'$,
	\item lower Banach density of $C(x)$ is bounded away from zero. Even more, there is a set $F$ such that for every $x\in X$ and every $g\in G$ it holds that $C(x) \cap Fg \not=\emptyset$.
\end{enumerate}
Moreover, 
\begin{equation}	\label{equivar}
C(gx)g=C(x)
\end{equation}
for all $g\in G$ and the map $x\mapsto C(x)$ is continuous in this sense that for any F\o lner set $F$ the sets $C(x)$ and $C(x')$ agree on $F$ if $x$ and $x'$ are close enough.
\end{cor}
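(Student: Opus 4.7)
The plan is to apply the Marker Lemma once with a carefully chosen set $H$ and then read off all four required properties from the resulting clopen set.

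First I would set $H = T^{-1}T$ and invoke Lemma \ref{marker} to obtain a clopen $V\subset X$ whose translates $h(V)$, $h\in T^{-1}T$, are pairwise disjoint, and such that $\bigcup_{f\in F}f(V)=X$ for some F\o lner set $F$. Then I define
\[
C(x) = \{\,g\in G : gx\in V\,\}.
\]
Equivariance is immediate: $C(gx) = \{h : hgx\in V\} = C(x)g^{-1}$, which is exactly $C(gx)g = C(x)$.

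Next I would verify the disjointness condition (1). Suppose $g,g'\in C(x)$ with $Tg\cap Tg'\neq\emptyset$. Then $t_1g = t_2g'$ for some $t_i\in T$, so $h := g'g^{-1} = t_2^{-1}t_1\in T^{-1}T$. Since $gx\in V$, we have $g'x = h(gx)\in h(V)$, while also $g'x\in V$. Thus $V\cap h(V)\neq\emptyset$, which by the Marker Lemma forces $h=e$, i.e.\ $g=g'$. The density condition (2) is equally direct: for any $g\in G$, applying the covering property to the point $gx$ gives some $f\in F$ with $gx\in f(V)$, i.e.\ $f^{-1}g\in C(x)$; this shows $C(x)\cap F^{-1}g\neq\emptyset$ for every $x\in X$ and $g\in G$, with $F^{-1}$ being the required finite (F\o lner) set.

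Finally, for continuity, I would use that $V$ is clopen and the action is by homeomorphisms, so for each fixed $g\in G$ the set $\{x : gx\in V\} = g^{-1}(V)$ is clopen. Hence for any finite (in particular F\o lner) set $F'\subset G$, the map $x\mapsto C(x)\cap F'$ into the finite discrete space $2^{F'}$ is locally constant, so it is constant on a small enough neighbourhood of any given $x$. This gives precisely the required continuity. I do not foresee a real obstacle here beyond identifying the correct $H$ to feed into the Marker Lemma; the key insight is that one needs $H = T^{-1}T$ (rather than just $T$) so that an intersection $Tg\cap Tg'$ translates into an intersection $V\cap h(V)$ with $h\in H$.
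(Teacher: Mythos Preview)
Your proof is correct and follows essentially the same route as the paper: apply the Marker Lemma with $H=T^{-1}T$, set $C(x)=\{g\in G:gx\in V\}$, and read off equivariance, disjointness, syndeticity (via $F^{-1}$), and continuity from the clopenness of $V$. The only cosmetic difference is that the paper phrases (1) contrapositively (assuming $g\in C(x)$ and showing $g'\notin C(x)$), whereas you assume $g,g'\in C(x)$ and deduce $g=g'$; the underlying computation is identical.
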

\begin{proof}
Let $V$ be a clopen set obtained by applying Marker lemma to the set $H=T^{-1}T$. Define $C(x)=\{g\in G\colon gx\in V\}$. Note that this immediately implies that $C(hx)h=C(x)$ for each $h\in G$. Suppose $g\in C(x)$. If $Tg \cap Tg'\not=\emptyset$, $g\not=g'$,  then $g'\in T^{-1}Tg=Hg$, so $g'g^{-1}\in H$. But $g'x\ = g'g^{-1}gx\in g'g^{-1}(V)$. Since both the neutral element $e$ and $g'g^{-1}$ are in $H$, the sets $V$ and $g'g^{-1}V$ are disjoint, so $g'x\not\in V$, meaning $g'\not\in C(x)$. This proves (1).

To prove the assertion about lower Banach density, notice that for every $g$ it holds that $\bigcup_{f\in F} gf(V)=X$ provided that $\bigcup_{f\in F} f(V)=X$. Hence, for every $x\in X$ and every $g\in G$ there is $f\in F$ such that $x\in g^{-1}f(V)$, i.e. $f^{-1}gx\in V$ or, in other words, $f^{-1}g\in C(x)$. We obtain $D_{F^{-1}}(C(x))=\inf_g \frac{|C(x)\cap F^{-1}g|}{|F^{-1}|} \geq \frac1{|F|}$. Hence $D(C(x))\geq \frac1{|F|}$.

Continuity of $x\mapsto C(x)$ stems from the fact that $V$ obtained in Marker lemma is clopen, hence for $g$ from a finite set $F$ images $gx$ simultaneously fall into $V$ or stay outside $V$ for $x$,$x'$ close enough.
\end{proof}


\section{The model}

Let $(F_n)$ be a F\o lner sequence in $G$. 
Let $(\eps_k)$ be a summable sequence of positive numbers. For the sake of convenience, we fix a selection function $\pi$ which assigns to a pair $(D,N)$, where  $D$ is a subset of $G$ containing at least $N$ elements, a sequence $\pi(D;N)$ of $N$ \emph{different} elements of $D$. 
We will define a sequence of block codes $\Phi_k$ defined on the array representation $\hat X$ of $X$ using an inductive procedure. Certain aspects of the construction cause the first two steps to be slightly different (simpler) than subsequent ones, which is why we will begin by describing steps 1 and 2 of the construction, and then proceed to describe the procedure of step $k+1$ based on step $k$.

\smallskip

STEP 1.
Let $T_0=\set{e}$ and let $\CB_1=(B_1^1,B_2^1,\ldots,B_{N_1}^1)$ be an $\eps_1$-dense family of blocks with domain $T_0$ occurring in $\hat X$, which in this step is just an $\eps_1$-dense set of symbols occurring in $\hat X$. Apply Corollary \ref{disjoint_occur} to $T_0$, obtaining for every $x\in X$ a set $C'_1(x)$ such that the sets $T_0c$ are pairwise disjoint for $c\in C'_1(x)$ and the set $T_0C'_1(x)$ has positive lower Banach density. 
Choose $m_1$ so that for every $g\in G$, $x\in X$, the set $F_{m_1}g$ contains at least $N_1$ elements $c\in C'_1(x)$ and its cardinality satisfies  $\eps_1\abs{F_{m_1}}>N_1$. 
(To obtain $F_{m_1}$ choose $F$ whose existence is granted by (2) of cor. \ref{disjoint_occur}, find a F\o lner set which contains $N_1$ disjoint copies $Fg_1$,...,$Fg_{N_1}$ of $F$, then choose $m_1$ so that $F_{m_1}$ contains $\bigcup_{i=1}^{N_1}Fg_i$; increase $m_1$ if necessary to fulfil the size conditions). 


Now apply Corollary \ref{disjoint_occur} to $F_{m_1}$ (playing a role of $T$), obtaining a set $C_1(x)$ for every $x\in X$. For every $x\in X$ and every $c\in C_1(x)$ the set $D=F_{m_1}\cap C'_1(x)c^{-1}$ contains an $N_1$-element sequence $\pi(D;N_1)$. Let 
\[
(d^c_1,\ldots,d^c_{N_1})=\pi(D;N_1)c\subset F_{m_1}c\cap C'_1(x).
\]
The point $\Psi_1(\hat x)$ will be created by replacing the content of $\hat x(T_0d_j^c)$ (i.e. a symbol at $\hat x(d_j^c)$) by $B_j^1$ for $j=1,\ldots,N_1$, rewriting the erased part of a trajectory of $x$ (which originally was in row $0$) to row $1$, adding the symbol $*$ in row $-1$ at coordinates $d_j^c$ and making no other changes. 
Namely, for $1\leq j\leq N_1$, let
\[D_j(x)=\set{d_j^c:c\in C_1(x)},\]
and define for $g\in D_j(x)$, where $j=1,...,N_{1}$,
\[
\Psi_1(\hat x)_{n,g}=\begin{cases} 
*&\text{if }n=-1 \\
B_j^1(0)& \text{if } n=0 \\
\hat x_{0,g}&\text{if } n=1 
\end{cases}
\]
and $\Psi_1(\hat x)_{n,g}=\hat x_{n,g}$ otherwise (by $B_j^1(0)$ we mean a symbol which lies on zero level of the block).
Continuity of the sets $C_1(x)$ and $C_1'(x)$, and their behaviour under the action of $G$, mean that the map $\Psi_1$ is continuous and commutes with the action of $G$. In fact, it may be considered a block code.
Furthermore, as $\Psi_1(\hat x)$ retains the original non-zero symbols of $\hat x$ (they were moved to row $1$ at the coordinates that were changed), it is invertible, i.e. the systems $\hat X$ and $\Psi_1(\hat X)$ are conjugate. We will now show that for any $x\in X$ every element of $\CB_1$ occurs in $\Psi_1(\hat x)$ syndetically. Indeed, the set $C_1(x)$ has positive lower Banach density (since it was obtained from the marker lemma). It follows there exists a set $E\subset G$ such that $Eg\cap C_1(x)$ is nonempty for every $g\in G$ (note that $E$ does not depend on $x$). Therefore, for every $g\in G$ the set $T_1g$, where $T_1=F_{m_1}E$, contains $F_{m_1}c$ for some $c\in C_1(x)$, which implies that every block from $\CB_1$ occurs in $\Psi_1(\hat x)$ inside $T_1g$, which is the definition of syndeticity. Moreover, we may assume that $T_1$ is symmetric, i.e. $T_1^{-1}=T_1$ (if not then replace it with $T_1^{-1}T_1$).

Finally, we choose $H_1$ using lemma \ref{window} for $F=T_1$ and we let $\Phi_1=\Psi_1$. The set $H_1$ will replace $T_1$ in the role of being a ``syndeticity constant'' for occurrence of elements of $\CB_1$ in subsequent systems we will create ($T_1$ itself would be too small). For convenience, we denote $X_1=\Phi_1(\hat X)$.

\smallskip

STEP 2.
Now let $\CB_2=(B_1^2,B_2^2,\ldots,B_{N_2}^2)$ be an $\eps_2$-dense family of blocks with domain $T_1$ occurring in $X_1$. Let $N_2$ be the cardinality of $\CB_2$.  Apply Corollary \ref{disjoint_occur} to the set $H_1^{-1} T_1$ obtaining for every $x\in X_1$ some set $C'_2(x)$. Note that for distinct $c,c'\in C'_2(x)$ each set $H_1g$ ($g\in G$) intersects at most one of $T_1c$, $T_1c'$. Indeed, if it intersected both of them then we would have $g\in H_1^{-1}T_1c \cap H_1^{-1}T_1c'$. 
Similarly as before, choose $m_2$ so that for every $g\in G$, $x\in X_1$, the set $F_{m_2}g$ contains at least $N_2$ elements $c\in C'_2(x)$ such that $T_1c\subset F_{m_2}g$; we can also request that 
 $N_{2}\abs{T_1}^2<\eps_2\abs{F_{m_2}}$. 
(The set $F_{m_2}$ is obtained using the following procedure: choose $F$ whose existence is granted by (2) of cor. \ref{disjoint_occur}, find a F\o lner set which contains $N_2$ disjoint copies $Fg_1$,...,$Fg_{N_2}$ of $F$, then choose $m_2$ so that $F_{m_2}$ contains $\bigcup_{i=1}^{N_2} T_1Fg_i$; increase $m_2$ if necessary to fulfil the size conditions). 

As in step 1, apply Corollary \ref{disjoint_occur} to $F_{m_2}$, obtaining a set $C_2(x)$ for every $x\in X_1$. 
For every $x\in X_1$ and every $c\in C_2(x)$ the set $D=F_{m_2}\cap C'_2(x)c^{-1}$ contains a $N_2$-element sequence $\pi(D;N_2)$. Let 
\[
(d^c_1,\ldots,d^c_{N_2})=\pi(D;N_2)c\subset F_{m_2}c\cap C'_2(x).
\]
The point $\Psi_2(x)$ will be created by replacing the content of $x(T_1d_j^c)$ by $B_j^2$ for $j=1,\ldots,N_2$, rewriting the erased part of a trajectory of $x$ (which at the moment was in rows $0$ or $1$) to row $2$, adding the symbol $*$ in row $-2$ at coordinates $d_j^c$ and $1$ at other modified coordinates, and making no other changes. Namely, for $1\leq j\leq N_2$, let
\[D_j(x)=\set{d_j^c:c\in C_2(x)},\]
and define 
\[
\Psi_2(x)_{n,g}=\begin{cases}
*&\text{if }n=-2 \text{ and } g\in D_j(x)\\
1 & \text{for } n=-2, g\in T_1d \text{ for some } d\in D_j(x),\\
 &  \text{\ \ but } g\not\in D_j(x),\\
x_{N,g} &\text{for } n=2,\ g\in T_1d \text{ for some } d\in D_j(x),\\
 & \text{\ \ where } N=\max\{i\colon x_{i,g}\not=0\}\\
B_j^2(gd^{-1})(n)& \text{for } n=-1,0,1 \text{ and } g \text{ as above}
\end{cases}
\]
and $\Psi_2(x)_{n,g}=x_{n,g}$ otherwise.
Let $\Phi_2=\Psi_2\circ \Psi_1$. 
Same as before, the map $\Psi_2$ is a conjugacy between $X_1$ and its image, therefore $\Phi_2$ is a conjugacy, too. Let $X_2=\Psi_2(X_1)=\Phi_2(\hat X)$. It follows from corollary \ref{disjoint_occur}(2) that each element of $\CB_2$ occurs in $X_2$ syndetically. More precisely, there is a set $E\subset G$ such that $Eg\cap C_2(x)$ is nonempty for every $g\in G$ and therefore, putting $T_2=F_{m_2}E$, we obtain that $T_2g$ contains some $F_{m_2}c$, $c\in C_2(x)$, for every $g\in G$. We can enlarge $E$ to obtain $T_1^4\subset E$ and assume that $T_2=T_2^{-1}$, replacing $T_2$ with $T_2^{-1}T_2$ if necessary. 

Moreover, elements of $\CB_1$ also occur syndetically in $X_2$. Indeed, let $\hat x \in X_2$ and let $g\in G$. By the construction of $C'_2(x)$, the set $H_1g$ ($g\in G$) intersects at most one $T_1c$ for $c\in C'_2(x)$. By the choice of $H_1$ (see STEP 1 and Lemma \ref{window}) there exists some $h$ such that $T_1h$ is a subset of $H_1g$ that is either disjoint from all $T_1c$ or is a subset of $T_1C_2'(x)$, in which case it must be exactly one of $T_1c$, $c\in C_2'(x)$. In either case, the block $\hat x(T_1h)$ is a block occurring in $X_1$, since it was either in an area unchanged by $\Psi_2$, or was entirely replaced by content of one of the blocks in $\CB_2$ (all of which occur in $X_1$). By the construction of $\Phi_1$, this means that $\hat x(T_1h)$ contains all the blocks from $\CB_1$, which ultimately means that every block from $\CB_1$ occurs syndetically in $\hat x$.  

Finally, choose $H_2$ using lemma \ref{window} for $F=H_1^4T_2$. Note that each $H_2g$ contains some $T_2h$, thus it contains all blocks from $\CB_2$.

\smallskip

STEP $(k+1)$. Suppose we have constructed:
\begin{itemize}
\item Maps $\Phi_1,...,\Phi_k$, which are conjugacies mapping $\hat X$ to $X_k$, where $X_k=\Phi_k(\hat X) \subset \Lambda^G$,
\item $H_1,...,H_k$, $T_0,T_1,...,T_k$, which are subsets of $G$, where $T_i=T_i^{-1}$ for all $i$,
\item $F_{m_1},...,F_{m_k}$, which are selected F\o lner sets,
\item $\CB_1,...,\CB_k$, which are collections of blocks such that 
\begin{enumerate}
	\item $\CB_j$ is a collection of blocks on the domain $T_{j-1}$, $\epsilon_j$-dense in the collection of all such blocks occurring in $X_{j-1}$,
	\item $T_j \subset H_j\subset T_{j+1}\subset H_k$ and $T_1^4...T_j^4\subset T_{j+1}$ for $j=1,...,k-1$,
	\item For any $j=1,\ldots,k$, every block with domain $T_j$ in $X_j$ has every block from $\CB_j$ as a subblock,
	\item For any $j=1,\ldots,k$ and any $i\geq j$, every block with domain $H_j$ in $X_i$ has every block from $\CB_j$ as a subblock,
	\item for every $A\subset G$ and $g \in G$ if $T_{j}A\subset E \subset T_{j}^5A$ then either $H_jg\setminus E$ contains $T_{j}h$ or $H_jg\cap E$ contains $T_{j}h$ for some $h \in H_jg$, $j=1,...,k$,
	\item $|\CB_j| |T_{j-1}|^2<\eps_{j}|F_{m_{j}}|$ for $j=1,...,k$.
\end{enumerate}
\end{itemize}
We will make use of the following:
\begin{quote}
\emph{Enlarging algorithm}\\
Let $T$ be a set in $G$. Fix $x\in X$ and $k \in \N$. Let $T'$ be the union of $T$ and all sets of the form $T_kg$ (where $g$ is any point such that $x(-k,g)=*$) that intersect $T_kT$. We put 
\[
E^1(T,x,k)=T_kT'.
\]
Then inductively we define 
\[
E^{j+1}(T,x,k)=E^1(E^{j}(T,x,k),x,k-j)
\]
for $1\leq j < k$ and
\[
E(T,x,k)=E^k(T,x,k).
\]
Note that $T_kT\subset E(T,x,k)\subset T_1^4T_2^4...T_k^4T$. More precisely, 
\[
T_jE^{k-j}(T,x,k) \subset E(T,x,k)  \subset T_1^4...T_j^4E^{k-j}(T,x,k) 
\]
for $1\leq j<k$. Indeed, by the inductive assumption (2), we can even say that 
\[T_jE^{k-j}(T,x,k) \subset E(T,x,k)  \subset T_j^5E^{k-j}(T,x,k)\]
\begin{figure}[h]
\centering
\includegraphics[width=\textwidth]{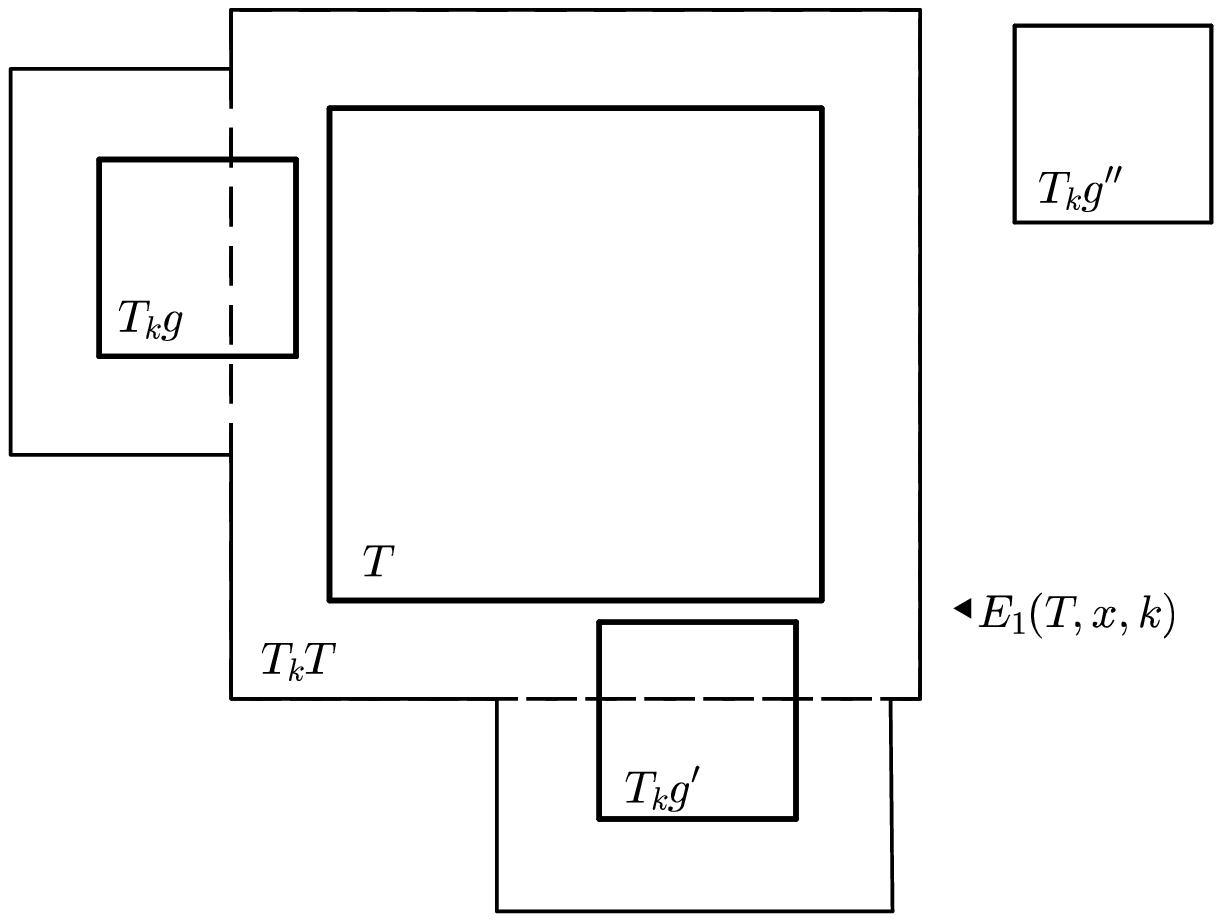}
\parbox{0.8\textwidth}
{\renewcommand{\normalsize}{\footnotesize}\caption{}
\label{enlarge}
}
\end{figure}
\end{quote}
We proceed with the construction.
 Let $\CB_{k+1}=(B_1^{k+1},B_2^{k+1},\ldots,B_{N_{k+1}}^{k+1})$ be an $\eps_{k+1}$-dense family of blocks with domain $T_k$, occurring in $X_k$. Let $N_{k+1}$ be the cardinality of $\CB_{k+1}$. 
 Let $\overline{T}=T_1^4T_2^4...T_{k-1}^4T_k$ (note that by our inductive assumptions, $T_k\subset \overline T \subset T_k^2$).
Apply Corollary \ref{disjoint_occur} to the set $H_k^{-1} \overline{T}$ obtaining for every $x\in X_k$ some set $C'_{k+1}(x)$. Note that for distinct $c,c'\in C'_{k+1}(x)$ each set $H_kg$ ($g\in G$) intersects at most one of $\overline{T}c$, $\overline{T}c'$. Moreover, $C'_{k+1}(x)$ has positive Banach density, therefore so do $\overline{T}C'_{k+1}(x)$ and $T_kC'_{k+1}(x)$. Similarly as before, choose $m_{k+1}$ so big that for every $g\in G$, $x\in X$, the set $F_{m_{k+1}}g$ contains at least $N_{k+1}$ elements $c\in C'_{k+1}(x)$ such that $T_{k}c\subset F_{m_{k+1}}g$; we can assume that 
 $N_{k+1}\abs{T_{k}}^2<\eps_{k+1}\abs{F_{m_{k+1}}}$. 

Apply again Corollary \ref{disjoint_occur} to $F_{m_{k+1}}$, obtaining for every $x\in X_k$ a set $C_{k+1}(x)$. 

Smilarly as in previous steps, for every $x\in X_k$ and every $c\in C_{k+1}(x)$ the set $D=F_{m_{k+1}}\cap C'_{k+1}(x)c^{-1}$ contains a $N_{k+1}$-element sequence $\pi(D;N_{k+1})$. Let 
\[
(d^c_1,\ldots,d^c_{N_{k+1}})=\pi(D;N_{k+1})c\subset F_{m_{k+1}}c\cap C'_{k+1}(x).
\]
For rows from $-k$ to $k$ we replace the current content of $x$ within the domain $E:=E(T_{k}d_j^c,x,k-1)$ by the content of 
any block with domain $E$ occurring in $X_k$ and containing $B_j^{k+1}$ as a subblock on coordinates from $T_{k}d_j^c$.

For $1\leq j\leq N_{k+1}$, let 
\[
D_j(x)=\set{d_j^c:c\in C_{k+1}(x)}.
\]
Denote $N(x,g)=\max\{j\colon x_{j,g}\not=0\}$ and define:
\[\Psi_{k+1}(\hat x)_{n,g}=\begin{cases}
*&\text{for }n=-(k+1)\textrm{ and } 
g \in D_j(x),\\
1 & \text{for } n=-(k+1), g\in T_kd \\
 &  \text{\ \ for some } d\in D_j(x),\text{ but } g\not\in D_j(x),\\
x_{N(x,g),g} &\text{for } n={k+1} \\
 & \text{\ \ and } g\in T_kd \text{ for some } d\in D_j(x),\\
B_j^{k+1}(gd^{-1})(n)& \text{for } n=-k,...,k \text{ and } g\text{ as above,}  \\
 & \text{\ \ with the rest of } E(T_{k}d_j,x,k-1)\\
 & \text{\ \ completed to a block}\\
 & \text{\ \ occuring in }\Psi_k(\hat X)
\end{cases}\]
and $\Psi_{k+1}(\hat x)_{n,g}=x_{n,g}$ otherwise.

Let $\Phi_{k+1}=\Psi_{k+1}\circ \Phi_{k}$, $X_{k+1}=\Phi_{k+1}(\hat X)$. Again it follows from the construction that $\Psi_{k+1}$ and hence also $\Phi_{k+1}$ are conjugacies. 

Since $C_{k+1}(x)$ was chosen with use of corollary \ref{disjoint_occur}, there is a set $E$ such that for every $x\in X_k$ and every $g\in G$ it holds that $C_{k+1}(x) \cap Eg \not=\emptyset$.
Enlarging $E$ we may assume that $T_1^4...T_k^4\subset E$.
Let $T_{k+1}=F_{m_{k+1}}E$. Replace $T_{k+1}$ with $T^{-1}_{k+1} T_{k+1}$ if it was not symmetric. Then $T_{k+1}g$ contains some $F_{m_{k+1}}c$, where $c\in C_{k+1}(x)$, for every $g\in G$. Consequently, any block with domain $T_{k+1}$ in $X_{k+1}=\Psi_{k+1}(X_k)$ has every block from $\CB_{k+1}$ as a subblock.

Choose $H_{k+1}$ using lemma \ref{window} for $F=T_{k+1}$ and $l=5$. Note that each $H_{k+1}g$ contains $T_{k+1}g$, so all blocks from $\CB_{k+1}$ occur in each $H_{k+1}g$. Also, all changes in step $k+1$ are on coordinates covered by sets of the form $T^2_{k}g$, where $g$ are such that $x(-(k+1),g)=*$.



Now we will show that also for each $j=1,...,k$ all blocks from $\CB_j$ occur in any block with domain $H_jg$ ($g\in G$) in $X_{k+1}$. Fix $g\in G$, $x\in X_k$ and $c\in C_{k+1}(x)$. For any set $E$ of the form $E(T_{k}d,x,k-1)$ 
(where $d\in D_j(x)$ for some $j$), there are three possibilities (indeed there is at most one such set for which one of the latter two is true): The set $H_jg\cap E$ can be empty, can be the whole of $H_jg$ or it can be something else. In the first two cases, the construction of $\Psi_{k+1}$ implies that the block with domain $H_jg$ is a block occurring in $X_k$, and thus contains all blocks from $\CB_j$ by the inductive assumption (4). Thus we need only concern ourselves with the case when $H_jg\cap E$ is not empty, but $H_jg$ is not a subset of $E$ (let us reiterate that this can happen for at most one $E$ among the sets we consider). The construction of $E$ and our inductive assumptions imply that there is some $h$ for which either $T_jh\subset H_jg\setminus E$ or $T_jh\subset H_jg\cap E$. Observe that $T_jh$ intersects no coordinates that were modified after step $j$: if it did, then it would intersect a set of the form $T_{k'}g'$ (for $j<k'\leq k$ and $g'$ such that $x(-k',g')=*$), but the enlarging algorithm would then cause $H_jg$ to be a subset of $E$. It follows that the block with domain $T_jh$ is a block occurring in $X_j$, thus it (as well as the larger block with domain $H_jg$) contains all blocks from $\CB_j$.
%


\section{The isomorphism}

Let $\tilde{X}^e_k$ be a subset of all $x\in X$ such that $\Phi_{k+1}(x)_e\not=\Phi_k(x)_e$. It is exactly the set of points whose $\Phi_k$-image   was modified on coordinate $e$ by $\Psi_{k+1}$.
By the ergodic theorem (see e.g. \cite{L}), for each $G$-invariant ergodic measure $\mu$ on $X$ it holds that 
\[
\mu(\tilde{X}^e_k) = \lim_{n\to\infty} \frac1{|F_{n}|}\sum_{g\in F_{n}} \ind_{\tilde{X}^e_k}(gx) \qquad \textrm{in } L^1(\mu).
\]
Let $F_{m_k}$ denote the F\o lner sets selected during the construction.
\newcommand{\Fk}{F_{m_{k+1}}}
The code $\Psi_{k+1}$ introduces changes only on at most $|\CB_{k+1}||T_k|^2 < \epsilon_{k+1}|F_{m_{k+1}}|$ coordinates of each block $x(F_{m_{k+1}}c)$, where $c\in C_{k+1}(x)$. We will estimate the number of changes in a block $x(F_n)$ for sufficiently large $n$. We choose $n$ so big that $F_n$ is $(F_{m_{k+1}},\eps_{k+1})$-invariant. For each $c\in C_{k+1}(x) \cap F_n$ the set $\Fk c$ is a subset of $\Fk F_n$, the latter having less than $(1+\eps_{k+1})F_n$ elements. Since $F_{m_k}c \cap F_{m_k}c' = \emptyset$ for $c\not=c'$, $c,c'\in C_{k+1}(x)$, the set $F_n$ may contain at most $\frac{(1+\eps_{k+1})|F_n|}{|\Fk|}$ elements of $C_{k+1}(x)$. 
Moreover, $F_n$ may intersect $\Fk c$ for some $c\in C(x)\setminus F_n$, but then $c\in \Fk^{-1}F_n=\Fk F_n$.
Total number of changes made by $\Psi_{k+1}$ is thus less than
\[
\left(\frac{(1+\eps_{k+1})|F_n|}{|\Fk|}  + \eps_{k+1}|F_n|\right)\cdot |\CB_{k+1}||T_k|^2< \eps_{k+1}(1+2\eps_{k+1})|F_n|.
\] 
Therefore, $\mu(\tilde{X}^e_k)\leq\eps_{k+1}(1+2\eps_{k+1}) < 2\epsilon_{k+1}$.

Let $\tilde{X}\subset X$ be the set of such $x\in X$ that for each $g\in G$ the sequence $\Phi_k(x)_g$ is eventually constant.
\[
\tilde{X}=\bigcap_{g\in G} \bigcup_{k=1}^\infty  \bigcap_{j=k}^\infty X\setminus g(\tilde{X}^e_j) = X \setminus \bigcup_{g\in G} \bigcap_{k=1}^\infty  \bigcup_{j=k}^\infty g(\tilde{X}^e_j)
\]
Since $\epsilon_k$ is summable, $\mu(\bigcup_{j=k}^\infty g(\tilde{X}^e_j))$ converges to zero when $k$ goes to infinity and therefore $\tilde{X}$ has full measure. Consequently, 
\[
\Phi(x)=\lim_{k\to\infty} \Phi_k(x)
\]
is defined on a full subset of $X$ (almost everywhere for each $G$-invariant measure). Let $Y$ be the closure of $\overline{\Phi(\tilde{X})}$ in $\Lambda^G$. Clearly, it is $G$-invariant. By essentially the same argument as in \cite{D} and \cite{FK} one can prove the following proposition.
\begin{prop}
\begin{enumerate}
	\item $\Phi(\tilde{X})$ is a full subset of $Y$.
	\item $\Phi$ is an equivariant Borel-measurable bijection onto a full set.
	\item $\Phi^*$ is an affine homeomorphism between simplices of invariant measures on $X$ and $Y$.
\end{enumerate}
\end{prop}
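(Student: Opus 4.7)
The plan is to establish the three items by standard arguments that parallel those of \cite{D} and \cite{FK}, adapted to the amenable setting; the bulk of the technical work has actually been frontloaded into the construction itself and the measure estimate for $\tilde X$ given just before the proposition.

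\textbf{Item (2) -- equivariance, Borel measurability, injectivity.} Each $\Phi_k\colon \hat X\to X_k$ is a topological conjugacy (this was verified at every step of the construction), so in particular each $\Phi_k$ commutes with the $G$-action. On $\tilde X$ the map $\Phi(x)$ is by definition the coordinatewise pointwise limit of $\Phi_k(x)$, so it is Borel measurable and still equivariant. For injectivity, I would exploit the crucial bookkeeping feature of the code $\Psi_{k+1}$: whenever the content of column $g$ is overwritten, the previously highest non-zero entry is promoted one row up (the $x_{N(x,g),g}$ clause in the definition of $\Psi_{k+1}$). Hence in $\Phi(x)$ the topmost non-zero symbol of column $g$ always equals $gx$, so one recovers $gx$ for every $g\in G$ from $\Phi(x)$, and in particular recovers $x$ itself.

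\textbf{Item (1) -- fullness of $\Phi(\tilde X)$ in $Y$.} The paragraph preceding the proposition already shows, via the $(F_{m_{k+1}},\epsilon_{k+1})$-invariance of F\o lner sets, that $\mu(\tilde X^e_k)<2\epsilon_{k+1}$ for every $G$-invariant $\mu$ on $X$, whence $\mu(\tilde X)=1$ by summability of $(\epsilon_k)$ and the Borel--Cantelli computation displayed there. To carry this over to $Y$ I would define, for each $y\in Y$, the analogous set $\tilde Y^e_k$ of points whose column at $e$ still contains a non-zero symbol on some row $n\geq k+1$, and repeat the block-counting argument inside $Y$. The key observation is that any finite block occurring in $Y$ occurs in every $X_j$ for $j$ large enough (since $Y=\overline{\Phi(\tilde X)}$ and the constructed codes eventually stabilise on any fixed finite window off a small set), so the frequency estimate from the $X$-side transfers to $Y$ verbatim and yields $\nu(\tilde Y^e_k)<2\epsilon_{k+1}$ for every invariant $\nu$ on $Y$. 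Summing over $G$ and taking complements gives $\nu(\Phi(\tilde X))=1$.

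\textbf{Item (3) -- $\Phi^*$ is an affine homeomorphism of simplices.} Once (1) and (2) are in hand, $\Phi^*(\mu)=\mu\circ\Phi^{-1}$ is a well-defined affine bijection between $\PP_G(X)$ and $\PP_G(Y)$, with set-theoretic inverse $\nu\mapsto \nu\circ\Phi$. Continuity in both directions is routine: a continuous cylinder function on $Y$ can be uniformly approximated by continuous cylinder functions precomposed with some $\Phi_k$, and the error is controlled by the same $\epsilon_k$-estimates, so weak-$*$ convergence is preserved. Compactness of the simplices upgrades this to a homeomorphism.

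\textbf{Main obstacle.} The subtle point is Item (1): $\tilde X$ being full in $X$ for every invariant measure on $X$ does not, a priori, force $\Phi(\tilde X)$ to be full in $Y$ for every invariant measure on $Y$, since taking closure could in principle create new invariant measures concentrated on the ``phantom'' set $Y\setminus \Phi(\tilde X)$. The resolution is precisely that the block-frequency bounds baked into the inductive construction (items (3)--(6) of the inductive hypothesis, together with the selection of $F_{m_{k+1}}$ so that $|\CB_{k+1}||T_k|^2<\epsilon_{k+1}|F_{m_{k+1}}|$) are \emph{uniform} and hence propagate to every invariant measure on the limit system $Y$; this is exactly the point at which the amenable group case requires the F\o lner-set estimates developed earlier in the paper, but beyond that the argument is the same as in \cite{D} and \cite{FK}.
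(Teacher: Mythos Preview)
Your outline for Items (2) and (3) matches the paper's proof essentially verbatim: injectivity via the ``topmost non-zero symbol stores $gx$'' bookkeeping, measurability as a pointwise limit of block codes, and for $\Phi^*$ the standard affine-bijection-plus-continuity-on-compacta argument (which the paper itself defers to \cite{FK}).

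There is, however, a genuine gap in your treatment of Item (1). You correctly define an analogue $\tilde Y^e_k$ and argue that the frequency bounds transfer, so the set $\tilde Y=\bigcap_g\bigcup_k\bigcap_{j\geq k}(Y\setminus g\tilde Y^e_j)$ is full for every invariant $\nu$ on $Y$. But then you write ``Summing over $G$ and taking complements gives $\nu(\Phi(\tilde X))=1$,'' and this is a non sequitur: what you have established is $\nu(\tilde Y)=1$, not $\nu(\Phi(\tilde X))=1$. You are missing the surjectivity step $\tilde Y\subset\Phi(\tilde X)$, i.e.\ that every point $y\in Y$ with eventually-zero columns is actually hit by $\Phi$. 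This is exactly the step the paper singles out (``We need to show that $\Phi$ maps $\tilde X$ onto $\tilde Y$''): given $y\in\tilde Y$, one reads off the candidate $x$ from the topmost non-zero level of $y$ at $e$, approximates $y$ by $y_n=\Phi(x_n)\in\Phi(\tilde X)$, and then uses continuity of the finite block codes $\Phi_N$ together with the fact that on any fixed F\o lner window only finitely many levels are non-zero to conclude $\Phi_N(x)=\Phi_{N+i}(x)$ agrees with $y$ there, hence $x\in\tilde X$ and $\Phi(x)=y$.

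This is not a triviality one can absorb into ``the same argument as in \cite{D},\cite{FK}'': your own ``Main obstacle'' paragraph correctly warns that closure could create phantom points outside $\Phi(\tilde X)$, but your proposed resolution (uniformity of the frequency bounds) only rules out measures supported on points with infinitely many non-zero levels --- it does not, by itself, put the remaining points into the range of $\Phi$.
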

\begin{proof}

For $y\in Y$ take $(y_n)\subset \Phi(\tilde X)$, $y_n=\Phi(x_n)$, converging to $y$. Let $Y_k^e$ be the set of all this $y\in Y$ for which $y_{-k,e}\not=0$. For sufficiently large $n$ on the set $F_m$ the positions of symbols in $y$ from $-k$ up to level $k$ coincide with those in $y_n$. Non-zero symbols appear on the $-k$th level of $y_n$ only at coordinates on which $x_n$ was modified by $\Psi_{k}$. Hence, for any $G$-invariant measure $\nu$ on $Y$ the number $\nu(Y_k^e)$ is estimated similarly to the case of $\tilde{X}^e_k$: the frequency of non-zero symbols on $-k$th level of $x(F_{m_k})$ is bounded by $\eps_{k}$, and again by the ergodic theorem
\[
\nu(Y^e_k) = \lim_{m\to\infty} \frac1{|F_{m}|}\sum_{g\in F_{m}} \ind_{Y^e_k}(gx)<\eps_k.
\]
Hence the set
\[
\tilde{Y}=\bigcap_{g\in G} \bigcup_{k=1}^\infty  \bigcap_{j=k}^\infty Y\setminus g(Y^e_k) = Y \setminus \bigcup_{g\in G} \bigcap_{k=1}^\infty  \bigcup_{j=k}^\infty g(Y^e_k),
\]
is a full set. Notice that for each $y\in\tilde{Y}$ and every $g\in G$ the elements $y_{n,g}$ are eventually zero (both when $n\to\infty$ and $n\to-\infty$).

We need to show that $\Phi$ maps $\tilde{X}$ onto $\tilde{Y}$. Choose $y\in\tilde{Y}$. Let $x\in X$ be the element lying on the last non-zero level of $y_{\cdot,e}$. Let $y=\lim_{n\to\infty}y_n$, $y_n\in \Phi(\tilde X)$, $\Phi(x_n)=y_n$. Consider some F\o lner set $F_k$. On $F_k$ the point $y$ has zeros on levels higher than some $N$. For $n$ sufficiently large, all $y_n$ have zeros on levels higher than $N$. Hence all coordinates of $x_n$ belonging to $F_k$ were changed at most $N$ times. It means that $\Phi(x_n)_{l,g}=\Phi_N(x_n)_{l,g}=\Phi_{N+i}(x_n)_{l,g}$ for $g\in F_k$ and $i\geq 1$. Since $\Phi_n$ is a block code, it is continuous, thus $\Phi_N(x)=\lim_{n\to\infty}\Phi_{N}(x_n)=\lim_{n\to\infty}\Phi_{N+i}(x_n)=\Phi_{N+i}(x)$. Consequently, in $x$ coordinates $g\in F_k$ are modified at most $N$ times by codes $\Phi_N$. Since $F_k$ is an arbitrarily big F\o lner set, $x\in\tilde X$. At the same time, the content of $x$ at these coordinates coincides with the content of $y=\lim_n y_n$, so $y=\Phi(x)$.

Injectivity follows easily from the fact that the last non-zero levels of $\Phi(x)$ contain the orbit of $x$ under the action of $G$.

The map $\Phi$ is measurable, because it is a limit of (continuous) block codes. Both spaces are compact, hence standard Borel, so the inverse is automatically measurable. This ends the proof of (1) and (2).

The map $\Phi^*$ is obviously affine, its bijectivity follows from the fact that $\Phi$ is a bijection between full sets, so it suffices to show that it is continuous (note that sets of invariant measures are compact). We skip this argument, because it is almost identical to the one for the $\Z^d$ case presented in \cite{FK}. 
\end{proof}


\section{Minimality}
Minimality of the system $(Y,G)$ follows from the following lemma.
\begin{lem}
Let $(Y,G)$ be an array system, $Y\subset\Lambda^G$. Let $\B_Y$ be the collection of all blocks occuring in $Y$ and let $\B_Y'\subset \B_Y$ be a countable collection of blocks with the following property:\\
for every $\epsilon>0$ and every $B\in \B_Y$ there exists $B'\in\B_Y'$ such that $D(B,B'')<\epsilon$ for some subblock $B''$ of $B'$.

If there exist a dense subset $Y'$ of $Y$ consisting of elements $y$ in which every $B \in \mathcal B'_{Y}$ occurs syndetically then the symbolic system $(Y, \sigma)$ is minimal.
\end{lem}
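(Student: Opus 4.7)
My plan is to fix a single $y^{\ast} \in Y'$, show that $\overline{Gy^{\ast}} = Y$ and that $y^{\ast}$ is uniformly recurrent (almost periodic), and then invoke the classical fact that the orbit closure of an almost periodic point is minimal. Since this orbit closure coincides with $Y$, the system $(Y,G)$ is itself minimal.

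For $\overline{Gy^{\ast}} = Y$, I would take an arbitrary $z \in Y$ and an arbitrary basic open neighborhood $V = \{w \in Y : d_\Lambda(w(h), z(h)) < \epsilon \text{ for all } h \in F\}$ with $F \subset G$ finite. The block $z|_F$ belongs to $\B_Y$, so the approximation clause yields $B' \in \B_Y'$ with domain $F' \supset F$ such that $\sup_{h \in F} d_\Lambda(B'(h), z(h)) < \epsilon$. Since $B'$ occurs in $y^{\ast}$ (syndetically, hence in particular at least once), there is $g \in G$ with $y^{\ast}(hg) = B'(h)$ for all $h \in F'$. Consequently $(gy^{\ast})(h) = B'(h)$ for $h \in F$, so $gy^{\ast} \in V$ and the orbit of $y^{\ast}$ is dense.

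For uniform recurrence of $y^{\ast}$, the same recipe is applied with $z$ replaced by $y^{\ast}$: I find $B' \in \B_Y'$ on $F' \supset F$ with $\sup_{h \in F} d_\Lambda(B'(h), y^{\ast}(h)) < \epsilon$. Now the full syndeticity of the set $S = \{g : y^{\ast}(hg) = B'(h) \text{ for all } h \in F'\}$ becomes essential: every $g \in S$ gives $(gy^{\ast})(h) = B'(h)$ for $h \in F$, so $gy^{\ast} \in V$. Hence the return set of $y^{\ast}$ to an arbitrary basic neighborhood $V$ contains a syndetic subset and is itself syndetic.

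The conceptual point worth flagging is that neither dense orbit alone (topological transitivity) nor uniform recurrence alone implies minimality, but together at a single point they do, because an almost periodic point generates a minimal subsystem. A second subtlety is that the alphabet $\Lambda$ is continuous while $\B_Y'$ is only countable; the approximation clause reconciles this by allowing an arbitrary block of $\B_Y$ to be matched within $\epsilon$ by a subblock of some $B' \in \B_Y'$, so the discrete notion of syndetic block occurrence translates into the syndetic topological return required by the definition of uniform recurrence.
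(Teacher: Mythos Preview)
Your proof is correct and takes a genuinely different route from the paper's. The paper argues directly that \emph{every} orbit in $Y$ is dense: given an arbitrary $x\in Y$ and a target $y\in Y$, it first approximates $y$ by some $y'\in Y'$, chooses $B'\in\B_Y'$ with a subblock $B''$ close to $y'|_F$, then---crucially---approximates $x$ by some $x'\in Y'$ well enough on the (fixed, finite) syndeticity set $H$ for $B''$, so that the occurrence of $B''$ in $x'$ at some $h_0\in H$ transfers to $h_0x$ being close to $y$. You instead work with a single $y^{\ast}\in Y'$, establish both density of its orbit and its uniform recurrence, and then invoke the standard fact that the orbit closure of an almost periodic point is minimal.

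Your route is shorter and, interestingly, never uses the density of $Y'$: the approximation property of $\B_Y'$ alone already lets the orbit of any single $y^{\ast}\in Y'$ reach an arbitrary $z\in Y$, so existence of one point in $Y'$ suffices. The paper's argument is fully self-contained (no appeal to the almost-periodicity characterisation of minimality) but pays for this with the extra $\epsilon/3$ approximation step needed to handle an arbitrary $x\notin Y'$, and it is there that density of $Y'$ is genuinely used.
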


\begin{proof}
Let us metrize the topology in $\Lambda^G$, $G=\{g_1,g_2,...\}$, by 
\[
\rho(x,y) = \max_{n\in\N} \frac{d_\Lambda(x(g_n),y(g_n))}n.
\]
We fix $x\in Y$ and aim to prove that the orbit $\{gx\colon g\in G\}$ is dense in $Y$. Let $\epsilon>0$ and let $y\in Y$. We will show that $\rho(gx,y)<\epsilon$ for some $g\in G$. Let $y'\in Y'$ satisfy $\rho(y,y')<\frac\epsilon3$. Choose $N\in\N$ such that $\frac1N\diam\Lambda<\frac\epsilon3$ and let $F$ be a F{\o}lner set containing $\{g_1,...,g_N\}$. 
Denote by $y(F')$ a block being the restriction of $y'$ to the domain $F$. It follows from the assumption that we can find a block $B'\in B'_Y$ such that $D(y'(F),B'')<\frac\epsilon3$ for some subblock $B''$ of $B'$. Note that $B''$ occurs in each element $z$ of $Y'$ syndetically, i.e.~there is a finite set $H\subset G$ such that $B''$ (possibly translated) is a subblock of $z(Hg)$ for each $g\in G$. In particular, it is so for $g$ being the neutral element, i.e. there is $h$ such that $Fh\subset H$ and $B''=z(Fh)=hz(F)$. It follows that $h\in H$, because F\o lner sets contain the neutral element.

Since $H$ is finite, we can approximate a point $x$ (fixed at the beginning) by $x'\in Y'$ so that $\rho(x,x')<\delta$, where delta is small enough to ensure that 
$\rho(hx,hx')<\frac\epsilon3$ for all $h\in H$. For some $h_0\in H$ we have $h_0x'(F)=B''$, hence for each $f\in F$ it holds that 
\[
d_\Lambda((h_0x')(f),y'(f)) \leq D(h_0x'(F),y'(F))=D(B'',y'(F))<\frac\epsilon3.
\] 
Since $F\supset\{g_1,...,g_N\}$, for $f\not\in F$ we have $\frac1N d_\Lambda((h_0x')(f),y'(f)) <\frac\epsilon3$, so $\rho(h_0x',y') \leq \frac\epsilon3$
and
\[
\rho(h_0x,y) \leq \rho(h_0x,h_0x') + \rho(h_0x',y') + \rho(y',y) < \epsilon.
\]
\end{proof}
Note that in our case the union $\bigcup_{k=1}^\infty \CB(X_k)$ is a collection of blocks with the desired property. Indeed, each block occurring in $\Phi(\tilde X)$ occurs already in one of $X_k$, because each coordinate of $\tilde{X}$ was modified only finitely many times. Note that the distance between blocks majorizes the distance between their subblocks sharing the same domain. Since $\Phi(\tilde X)$ is dense in $Y$, the main theorem \ref{main_thm} is proved.


\end{document}